\documentclass[11pt]{amsart}
\usepackage{geometry}                % See geometry.pdf to learn the layout options. There are lots.
\usepackage{graphicx}
\usepackage{amsmath,amssymb,amsthm}

\title{Frobenius numbers of Pythagorean triples}

\author[Gil]{Byung Keon Gil}
\address[Byung Keon Gil]{Naedong Middle School\\Republic of Korea}
\email{kbyungk611@naver.com}
\author[Han]{Ji-woo Han}
\address[Ji-woo Han]{Gyeonggi Science High School for the Gifted\\Republic of Korea}
\email{cindy035@naver.com}
\author[Kim]{Tae Hyun Kim}
\address[Tae Hyun Kim]{Dong Middle School\\Republic of Korea}
\email{kth508@naver.com}
\author[Koo]{Ryun Han Koo}
\address[Ryun Han Koo]{Ungnam Middle School\\Republic of Korea}
\email{gangsu999@naver.com}
\author[Lee]{Bon Woo Lee}
\address[Bon Woo Lee]{Daegu Science High School\\Republic of Korea}
\email{righthim@naver.com}
\author[Lee]{Jaehoon Lee}
\address[Jaehoon Lee]{Department of Mathematics\\University of California, Los Angeles\\United States of America}
\email{jaehoon.lee900907@gmail.com}
\author[Nam]{Kyeong Sik Nam}
\address[Kyeong Sik Nam]{Department of Mathematics\\Seoul National University\\Republic of Korea}
\email{nam-whoami@hanmail.net}
\author[Park]{Hyeon Woo Park}
\address[Hyeon Woo Park]{Daebang Middle School\\Republic of Korea}
\email{jimmy0709@naver.com}
\author[Park]{Poo-Sung Park$^\dag$}
\thanks{$^\dag$ Corresponding author}
\address[Poo-Sung Park]{Department of Mathematics Education\\Kyungnam University\\Republic of Korea}
\email[Corresponding author]{pspark@kyungnam.ac.kr}
\thanks{This research was supported by Basic Science Research Program through the National Research Foundation of Korea(NRF) funded by the Ministry of Education(NRF-2013R1A1A2010614).}
\thanks{This research was partially supported by Institute of the Gifted Education in Science of Kyungnam University.}
%\date{}                                           % Activate to display a given date or no date

\newtheorem*{Lem}{Lemma}
\newtheorem*{Thm}{Theorem}

%\linespread{1.6}

\begin{document}
\maketitle

\begin{abstract}
Given relatively prime integers $a_1, \dotsc, a_n$, the Frobenius number $g(a_1, \dotsc, a_n)$ is defined as the largest integer which cannot be expressed as $x_1 a_1 + \dotsb + x_n a_n$ with $x_i$ nonnegative integers.

In this article, we give the Frobenius number of primitive Pythagorean triples. That is,
\[
	g(m^2-n^2, 2mn, m^2+n^2) = (m-1)(m^2-n^2) + (m-1)(2mn) - (m^2 + n^2).
\] 
\end{abstract}

\section{Introduction}
The histroy of Frobenius numbers goes back to J. J. Sylvester. He proposed a problem to find the largest number for two relatively prime integers $a$ and $b$ which cannot be expressed as $ax+by$ with $x$ and $y$ nonnegative integers \cite{Sylvester}. His answer was $ab-a-b$.

F. G. Frobenius emphasized this problem and asked to study such a number for more than two integers. That is, given relatively prime integers $a_1, a_2, \dotsc, a_n$, he asked to find the largest integer $N = g(a_1, a_2, \dotsc, a_n)$ for which $a_1 x_1 + a_2 x_2 + \dotsb + a_n x_n = N$ has no nonnegative integer solutions. The number henceforth has been called \emph{Frobenius number}. 

Many mathematicians have studied Frobenius numbers and found some algorithms to compute them, nevertheless no one found a general exact formula. In actual, unlike Sylvester's elegant formula, there are no polynomial solutions for more than two integers \cite{Curtis}. In general, it is NP-hard to find the Frobenius numbers for the number of given integers \cite{Ramirez-Alfonsin}. 

So, lots of the studies have focused on specific sets of integers. For example, formulas for arithmetic progressions and geometric progressions were found \cite{Roberts}, \cite{OP}. The formula for Fibonacci numbers $F_i, F_{i+2}, F_{i+k}$ was found \cite{MRR}.

In the present article, we give a formula for primitive Pythagorean triples. This research was performed as a program of Institute of the Gifted Education in Science of Kyungnam university.

\section{Main result}

We have that $(m^2-n^2, 2mn, m^2+n^2)$ is a primitive Pythagorean triple if two integers $m > n$ are realtively prime and have opposite parity.

\begin{Thm}
	The Frobenius number for a Pythagorean triple is 
	\begin{align*}
		&g(m^2-n^2, 2mn, m^2+n^2) \\
		&= (m-1)(m^2-n^2) + (m-1)(2mn) - (m^2+n^2).
	\end{align*}
\end{Thm}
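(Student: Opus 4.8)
The plan is to compute the Frobenius number through the Ap\'ery set of $S := \langle a,b,c\rangle$ with respect to the hypotenuse, where $a = m^2-n^2$, $b=2mn$, $c=m^2+n^2$. Since $m,n$ are coprime of opposite parity one checks $\gcd(a,b,c)=1$ and $\gcd(a,c)=1$, so $S$ is a numerical semigroup and, by the Brauer--Shockley formula, $g(S) = \max \mathrm{Ap}(S,c) - c$, where $\mathrm{Ap}(S,c)$ is the set containing, for each residue $r$ modulo $c$, the least element of $S$ congruent to $r$. If $xa+yb+zc \in S$ is congruent to $r$ then so is $xa+yb$, and $xa+yb \le xa+yb+zc$; hence this least element is always of the form $xa+yb$ with $x,y\ge 0$. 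So writing $\phi(x,y):=xa+yb$ and $\Lambda := \{(x,y)\in\mathbb Z^2 : \phi(x,y)\equiv 0\pmod c\}$, the task becomes: in each coset of $\Lambda$, find the point of $\mathbb Z_{\ge 0}^2$ on which $\phi$ is minimal, and then take the maximum of $\phi$ over those minimal points.

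The next step is to exhibit three elements of $\Lambda$ that force any minimal point into a bounded region. From $ma+nb = m(m^2+n^2) = mc$ and $na-mb = -n(m^2+n^2) = -nc$ we get $v_1 := (m,n)\in\Lambda$ with $\phi(v_1)=mc$ and $v_2 := (n,-m)\in\Lambda$ with $\phi(v_2)=-nc$; summing and negating, $\lambda_0 := (-(m+n),\,m-n)\in\Lambda$ with $\phi(\lambda_0) = -(m-n)c$ (using $a=(m-n)(m+n)$). Now suppose $(x,y)\in\mathbb Z_{\ge 0}^2$ is $\phi$-minimal in its coset. If $y\ge m$ then $(x,y)+v_2\in\mathbb Z_{\ge 0}^2$ lies in the same coset with strictly smaller $\phi$, a contradiction; so $y\le m-1$. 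If $x\ge m$ and $y\ge n$ then $(x,y)-v_1$ gives the same contradiction; so $x\le m-1$ or $y\le n-1$. If $x\ge m+n$ then $(x,y)+\lambda_0\in\mathbb Z_{\ge 0}^2$ (its second coordinate is $y+m-n\ge 0$ because $m>n$) again lowers $\phi$; so $x\le m+n-1$. Hence every minimal point lies in
\[
\mathcal R := \{(x,y): 0\le x\le m+n-1,\ 0\le y\le m-1\}\setminus\{(x,y): m\le x\le m+n-1,\ n\le y\le m-1\}.
\]

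A direct count gives $|\mathcal R| = (m+n)m - n(m-n) = m^2+n^2 = c$. Since every one of the $c$ cosets of $\Lambda$ contains at least one $\phi$-minimal point and all such points lie in $\mathcal R$, the minimal points are exactly the $c$ elements of $\mathcal R$, one per coset; thus $\mathrm{Ap}(S,c) = \{\phi(x,y) : (x,y)\in\mathcal R\}$. It remains to maximize $\phi(x,y)=xa+yb$ over $\mathcal R$. If $(x,y)\in\mathcal R$ has $y\le n-1$, then $\phi(x,y)\le\phi(m+n-1,n-1)$; if instead $y\ge n$, then $x\le m-1$ by the definition of $\mathcal R$, so $\phi(x,y)\le\phi(m-1,m-1) = (m-1)(a+b)$. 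Since $\phi(m-1,m-1)-\phi(m+n-1,n-1) = -na+(m-n)b = n(m-n)^2 > 0$, the maximum is $(m-1)(a+b) = (m-1)(m^2-n^2)+(m-1)(2mn)$, and subtracting $c$ yields the claimed value of $g(S)$.

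I expect the one step that is not bookkeeping to be spotting $\lambda_0$: the two ``obvious'' relations $v_1$ and $v_2$ only bound $y$ and rule out the upper-right block, leaving $x$ unbounded, so no finite region emerges until one observes that $-(v_1+v_2)$ strictly decreases $\phi$ while moving left and (harmlessly, since $m>n$) upward. Once $\mathcal R$ is in hand, the equality $|\mathcal R| = c$ is the check that no further relations are needed, and the rest --- the elementary coprimality facts, the three reductions above, and the two-corner comparison at the end --- is routine.
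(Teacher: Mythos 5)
Your argument is correct, but it takes a genuinely different route from the paper. The paper proves the two inequalities separately: for $g \ge A$ it assumes a representation $x(m^2-n^2)+y(2mn)+z(m^2+n^2)=A$, extracts a parity constraint forcing $z=x+mk$ with $k$ odd, and in both resulting cases derives a representation of $mn-m-n$ by $m$ and $n$, contradicting Sylvester; for $g\le A$ it proves a technical lemma bounding $(ym+xn)(m^2-n^2)$ in terms of the smallest $y$ for which the interval $\bigl[\frac{b+yn}{m},\frac{ym}{n}\bigr]$ contains an integer, and then exhibits an explicit nonnegative decomposition of every $A+k$. You instead compute the entire Ap\'ery set with respect to the hypotenuse: the three lattice relations $\phi(m,n)=mc$, $\phi(n,-m)=-nc$, $\phi(-(m+n),m-n)=-(m-n)c$ confine every coset-minimal point to the L-shaped region $\mathcal R$, and the count $|\mathcal R|=(m+n)m-n(m-n)=m^2+n^2=c$ closes the pigeonhole, after which the answer is a comparison of the two corners $(m-1,m-1)$ and $(m+n-1,n-1)$ (the difference $n(m-n)^2>0$ is the right sign). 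I verified the coprimality facts, the three reduction steps, the cardinality of $\mathcal R$, and the corner comparison; all are sound, and the small implicit points (every coset of $\Lambda$ meets $\mathbb Z_{\ge0}^2$, the minimum of $\phi$ on that intersection is attained, and the least element of $S$ in each residue class has $z=0$) are all immediate. Your approach buys more than the paper's: it yields both inequalities at once, determines the full Ap\'ery set (hence also the genus and the complete set of gaps as a byproduct), and places the result in the standard L-shape framework for three-generator numerical semigroups, with the one nonobvious move being the third vector $-(v_1+v_2)$ that bounds $x$. The paper's approach is more elementary and self-contained --- it uses nothing beyond Sylvester's two-generator formula --- at the cost of two separate and somewhat ad hoc case analyses.
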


For convenience, let 
\begin{align*}
	A 
	&= (m-1)(m^2-n^2) + (m-1)(2mn) - (m^2+n^2) \\
	&= m (m^2 +2mn - n^2 - 2m - 2n).
\end{align*}

First, we prove that $g(m^2-n^2, 2mn, m^2+n^2) \ge$ A. To do this, we show that A cannot be expresssed by $m^2-n^2$, $2mn$, and $m^2+n^2$.

Suppose that there exist nonnegative integers $x$, $y$, and $z$ satisfying
\[
	x(m^2-n^2) + y(2mn) + z(m^2+n^2) = A. \tag{*}
\]

Dividing both sides by $m$, we obtain that $(-x+z)n^2 \equiv 0 \pmod{m}$. Since $m$ and $n$ are relatively prime, we can set $z = x+mk$. Plug it into the above expression and divide by $m$. Then
\begin{align*}
	km^2 + 2xm + 2yn + kn^2 &= m^2 + 2mn - n^2 - 2m - 2n \\
	k(m^2 + n^2) + 2(xm+yn) &= (m^2-n^2) + 2(mn-m-n)	
\end{align*}
and we can conclude that $k$ is odd since $m^2+n^2$ and $m^2-n^2$ are both odd.

Setting $k = 2\ell+1$, we obtain
\[
	\ell m^2 + \ell n^2 + xm + yn + n^2 = mn-m-n.	
\]

If $\ell \ge 0$, then
\[
	(\ell m + x)m + (\ell n + y + n)n = mn - m - n,
\]
but this is absurd because $g(m,n) = mn-m-n$ by Sylvester. It follows that $\ell \le -1$. Set $\ell' = -\ell-1 \ge 0$.

Changing roles of $x$ and $z$, plugging $x = z - mk$ into the above expression (*) yields
\[
	-mk(m^2-n^2) + y(2mn) + z(2m^2) = m(m^2+2mn-n^2-2m-2n).
\]
Divide both sides by $m$ and rearrange.
\[
	-k(m^2-n^2) + y(2n) +z(2m) = m^2 + 2mn - n^2 -2m - 2n
\]
Using $k = 2\ell+1 = 2(-\ell'-1)+1 = -2\ell'-1$, we can write
\begin{align*}
	\ell'(m+n)(m-n) + yn + zm &= mn - m - n \\
	\left(\ell'(m-n)+z\right)m + \left(\ell'(m-n)+y\right)n &= mn-m-n.
\end{align*}
This is also absurd. So it was proved that
\begin{align*}
	&g(m^2-n^2, 2mn, m^2+n^2) \\
	&\ge (m-1)(m^2-n^2) + (m-1)(2mn) - (m^2+n^2).
\end{align*}

Now, we prove that $g(m^2-n^2,2mn,m^2+n^2) \le A$. That is, we show that every integer greater than $A$ is expressed as $x(m^2-n^2) + y(2mn) + z(m^2+n^2)$ with $x,y,z \ge 0$. To do this, we need a lemma.

\begin{Lem}
For a fixed positive integer $b$, we define $y$ to be the smallest positive integer such that the interval $\left[\frac{b+yn}{m},\frac{ym}{n}\right]$ contains an integer, and we also define $x$ to be the smallest integer contained. Then, the inequality
\[
	(ym+xn)(m^2-n^2) \le A + (m^2-n^2) + b(2mn)
\]
holds.
\end{Lem}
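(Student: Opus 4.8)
The plan is to reparametrize the left-hand side. Set $P = xm - yn$ and $Q = ym - xn$; a direct expansion gives the identity
\[
(ym+xn)(m^2-n^2) = P\,(2mn) + Q\,(m^2+n^2),
\]
so the inequality to be proved is equivalent to $(P-b)(2mn) + Q(m^2+n^2) \le A + (m^2-n^2)$. Since $x$ is by construction the least integer $\ge \frac{b+yn}{m}$ and it lies in the interval, we read off immediately that $b \le P \le b+m-1$ and $Q \ge 0$, hence $0 \le P-b \le m-1$ and $0 \le Q \le m-1$. (Such a $y$ exists at all because the interval has length $\frac{y(m^2-n^2)-bn}{mn}$, which eventually exceeds $1$.) These crude bounds are not sufficient on their own --- the inequality would fail if both were tight --- so the core of the proof is to squeeze one more unit of slack out of the minimality of $y$.

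Concretely, I would prove the dichotomy: \emph{either} $P - b \le m-n-1$ \emph{or} $Q \le m-n-1$. Suppose first $y \ge 2$. By minimality, the interval $\left[\frac{b+(y-1)n}{m},\frac{(y-1)m}{n}\right]$ contains no integer; writing $f(j) = \left\lceil \frac{b+jn}{m}\right\rceil$, this forces $f(y-1)\,n \ge (y-1)m+1$. Because $\frac{b+jn}{m}$ increases by $\frac{n}{m}<1$ when $j$ increases by one, $x = f(y)$ is either $f(y-1)$ or $f(y-1)+1$. If $x = f(y-1)$, then $xn \ge (y-1)m+1$ gives $Q \le m-1$, while $f(y-1) < \frac{b+(y-1)n}{m}+1$ gives $P = xm-yn < b+m-n$, i.e. $P-b \le m-n-1$. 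If $x = f(y-1)+1$, then $xn \ge (y-1)m+1+n$, so $Q = ym-xn \le m-n-1$. The base case $y=1$ is handled by hand: there $x \ge 1$, so $Q = m - xn \le m-n$, and either $x=1$, forcing $P = m-n$ and hence $P-b \le m-n-1$, or $x \ge 2$, forcing $Q \le m-2n \le m-n-1$.

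To conclude, note that the coefficients $2mn$ and $m^2+n^2$ are positive, so $(P-b)(2mn)+Q(m^2+n^2)$ is increasing in each of $P-b$ and $Q$ separately. If $P-b \le m-n-1$, it is therefore at most $(m-n-1)(2mn)+(m-1)(m^2+n^2)$, and a short expansion shows that this equals $A + (m^2-n^2)$ exactly. If instead $Q \le m-n-1$, it is at most $(m-1)(2mn)+(m-n-1)(m^2+n^2)$, which is smaller than the previous value by $n(m-n)^2 \ge 0$ and hence also $\le A+(m^2-n^2)$. Combined with the identity of the first paragraph, this is precisely the assertion. The step I expect to be the main obstacle is the dichotomy in the second paragraph: one must argue cleanly that passing from $y$ to $y-1$ genuinely destroys the lattice point in the interval, and then convert that combinatorial statement into the numerical gain $m-n-1$ in one of the two coordinates; everything else is routine bookkeeping.
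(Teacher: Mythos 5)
Your proof is correct. The combinatorial core coincides with the paper's: your two subcases $x=f(y-1)$ and $x=f(y-1)+1$ are exactly the paper's Case 1 (where $x-1<\frac{b+(y-1)n}{m}<x$) and Case 2 (where $x-2<\frac{b+(y-1)n}{m}<x-1$), and the four bounds you extract, namely $P-b\le m-n-1$ together with $Q\le m-1$ in the first subcase and $P-b\le m-1$ together with $Q\le m-n-1$ in the second, are precisely the paper's inequalities (1-1), (1-2), (2-1), (2-2) rewritten in the variables $P=xm-yn$ and $Q=ym-xn$. Where you genuinely depart is in how the estimate is assembled: the paper eliminates $y$ between its two inequalities to obtain a standalone bound on $x$ ((1-3), resp.\ (2-3)) and then verifies the target by a rather opaque expansion, whereas your identity $(ym+xn)(m^2-n^2)=P(2mn)+Q(m^2+n^2)$ reduces the conclusion to maximizing a function that is increasing in $P-b$ and in $Q$ over the admissible box, with maximum value $(m-n-1)(2mn)+(m-1)(m^2+n^2)=A+(m^2-n^2)$ attained at a corner. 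This decomposition is cleaner, explains where the constant $A+(m^2-n^2)+b(2mn)$ comes from, and dovetails with the end of the theorem's proof, where $xm-yn-b$ and $ym-xn$ reappear as the coefficients of $2mn$ and $m^2+n^2$ in the final representation of $A+k$. Your separate treatment of $y=1$, where the minimality of $y$ cannot be invoked for $y-1=0$ and where the hypothesis $b\ge 1$ is actually needed to get $P-b\le m-n-1$ when $x=1$, is a point the paper passes over and is handled correctly here.
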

\begin{proof}
The existence of $y$ is guaranteed from $\frac{n}{m} < 1 < \frac{m}{n}$.

From the condition, if $y = 0$, then $x$ and $b$ also vanish. Thus the inequlity holds for $y=0$ trivially.

Let us consider when $y \ge 1$. We have that $x-1 < \frac{b+yn}{m}$ from the minimality of $x$. Also,
\[
	\frac{b+(y-1)n}m = \frac{b+yn}m - \frac{n}{m} > \frac{b+yn}{m} - 1 > x-2.
\]
We divide two cases according to comparison of $x-1$ and $\frac{b+(y-1)n}m$.

Case 1: $x-1 < \frac{b+(y-1)n}m < x$.

Note that 
\[
	(x-1)m + 1 = xm - m + 1 \le b + (y-1)n = b + yn - n. \tag{1-1}
\]

If $x \le \frac{(y-1)m}{n}$, then $\frac{b+(y-1)n}{m} < x \le \frac{(y-1)m}{n}$ and it contradicts the minimality of $y$. Thus, $\frac{(y-1)m}{n} < x$ or 
\[
	(y-1)m+1 = ym - m + 1 \le xn. \tag{1-2}
\]

Combining the above two inequalities (1-1) and (1-2), we obtain 
\[
	m (xm-m+1+n-b) \le ymn \le n(xn+m-1).
\]
This inequality yields a new bound for $x$. That is,
\begin{equation*}
	x \le \frac{m^2+bm-m-n}{m^2-n^2}. \tag{1-3}
\end{equation*}
Now, we can conclude that
\begin{align*}
&(ym+xn)(m^2-n^2) \\
&\le (xn+m-1 + xn)(m^2-n^2) &\text{by (1-2)}\\
&\le \left( 2n \times \frac{m^2+bm-m-n}{m^2-n^2} + m - 1 \right) (m^2-n^2) &\text{by (1-3)}\\
&= b(2mn) + m(m^2-n^2) + (m-1)(2mn) - (m^2+n^2) \\
&= A + (m^2-n^2) + b(2mn).
\end{align*}

Case 2: $x-2 < \frac{b+(y-1)n}m < x-1$.

We already verified that $x-1 < \frac{b+yn}{m}$ at the start of the proof. Thus,
\[
	xm - m + 1 \le b + yn. \tag{2-1}
\]

If $x-1 \le \frac{(y-1)m}{n}$, then $\frac{b+(y-1)n}{m} \le x-1 \le \frac{(y-1)m}{n}$ and it cotradict the minimality of $y$. Thus $\frac{(y-1)m}{n} < x-1$ or 
\[
	(y-1)m + 1 = ym - m + 1 \le (x-1)n = xn - n. \tag{2-2}
\]

From the above two inequalities (2-1) and (2-2),
\[
	m (xm - m + 1 - b) \le ymn \le n(xn - n + m - 1)
\]
and thus
\[
	x \le 1+ \frac{mn+bm-m-n}{m^2-n^2}. \tag{2-3}
\]
Then,
\begin{align*}
&(ym+xn)(m^2-n^2) \\
&\le (xn+m-n-1 + xn)(m^2-n^2) &\text{by (2-2)}\\
&\le \left( 2n \times \frac{mn+bm-m-n}{m^2-n^2} + m + n -1 \right) (m^2-n^2) &\text{by (2-3)}\\
&= b(2mn) + m(m^2-n^2) + (m-1)(2mn) - (m^2+n^2) \\
&= A + (m^2-n^2) + b(2mn).
\end{align*}

Hence, the proof of Lemma was completed.
\end{proof}

Let us resume the proof of Theorem. Let $k$ be an arbitrary positive integer. We can choose two positive integers $a$ and $b$ such that
\[
	a(m^2-n^2) - b(2mn) = A+k.
\]
If $a$ is chosen to be negative, consider
\[
	(a + 2mnt)(m^2-n^2) - \left(b + (m^2-n^2)t \right)(2mn) = A+k
\]
for sufficiently large $t$.

Recall $x$ and $y$ in Lemma. From $\frac{b+yn}{m} \le x \le \frac{ym}{n}$,
\[
	-b + xm - yn \ge 0
	\qquad\text{ and }\qquad
	ym - xn \ge 0.
\]

If $a+1 \le ym + xn$, then
\begin{align*}
1 
&\le k = a(m^2-n^2) - b(2mn) - A \\
&\le (ym+xn-1)(m^2-n^2) - b(2mn) - A \\
&= (ym+xn)(m^2-n^2)-(m^2-n^2) - b(2mn) - A \\
&\le A + (m^2-n^2) + b(2mn) - (m^2-n^2) - b(2mn) - A &\text{by Lemma}\\
&= 0.
\end{align*}
This contradiction yields $ym + xn \le a$.

Theferefore,
\begin{align*}
A+k 
&= a(m^2-n^2)+(-b)(2mn) \\
&= (a-ym-xn)(m^2-n^2) \\
&+ (-b+xm-yn)(2mn) \\
&+ (ym-xn)(m^2+n^2)
\end{align*}
and $a-ym-xn \ge 0$, $-b+xm-yn \ge 0$, $ym-xn \ge 0$.

We conclude that $g(m^2-n^2,2mn,m^2+n^2) = A$.

\section*{Acknowledgement}

The authors would like to thank professor Byungchan Kim for reading this article and for his helpful comments.


\begin{thebibliography}{123}
\bibitem{Curtis} F. Curtis, \textit{On formulas for the Frobenius number of a numerical semigroup}, Math. Scand. \textbf{67} (1990), no. 2, 190--192. 
%
\bibitem{MRR} J.M. Mar\'in, J. L. Ram\'ires Alfos\'in, M.P. Revuelta, \textit{On the Frobenius number of Fibonacci numerical semigroups}, Integers \textbf{7(1)} (2007) no. A14, 1--7.
%
\bibitem{OP} D. C. Ong and V. Ponomarenko, \textit{The Frobenius Number of Geometric Sequences}, Integers \textbf{8} (2008) no. A33, 1--3.
%
\bibitem{Ramirez-Alfonsin} J. L. Ram\'irez Alfons\'in, \textit{Complexity of the Frobenius problem}, Combinatorica \textbf{16} (1996), no. 1, 143--147. 
%
\bibitem{Roberts} J. B. Roberts, \textit{Note on linear forms}, Proc. Amer. Math. Soc. \textbf{7} (1956), 465--469.
%
\bibitem{Sylvester} J. J. Sylvester, ``Problem 7382'', Educational Times, \textbf{37}, 1884, 26; reprinted in ``Mathematical questions with their solution'', with additional papers and solutions, Educational Times, \textbf{41}, 1884, 21.
%
\end{thebibliography}
\end{document}